\documentclass{scrartcl}
\usepackage[british]{babel}
\usepackage[textwidth=150mm,top=2cm]{geometry}

\usepackage{dsfont, amsthm, mathtools,  enumerate,amsmath,amsrefs,array}

\usepackage{dsfont}
\usepackage{amssymb}
\usepackage{aurical}
\usepackage{mathtools}
\usepackage{longtable}
\usepackage{wasysym}
\usepackage{euscript}
\usepackage{mathrsfs}
\usepackage{tikz}
\usepackage[hidelinks]{hyperref}

\usepackage{xcolor}

\usepackage{libertine}

\usepackage{todonotes}

\title{Decay estimate for the solution of the evolutionary damped $p$-Laplace equation }

\author{Farid \textsc{Bozorgnia}\footnote{Corresponding Author: bozorg@math.ist.utl.pt, CAMGSD, Instituto Superior T\'{e}cnico, University of Lisbon, Av. Rovisco Pais, 1049-001 Lisbon, Portugal.} \and Peter \textsc{Lewintan}\footnote{peter.lewintan@uni-due.de, University of Duisburg-Essen, Germany}}
\date{\today}

\theoremstyle{plain}
\newtheorem{theorem}{Theorem}[section]
\newtheorem{corollary}[theorem]{Corollary}
\newtheorem{lemma}[theorem]{Lemma}

\theoremstyle{definition}
\newtheorem{definition}[theorem]{Definition}

\theoremstyle{remark}
\newtheorem{remark} [theorem]{Remark}

\usepackage{dsfont,mathrsfs,euscript,nicefrac,multirow,arydshln}

\newcommand{\R}{\mathds R}

\newcommand{\skalarProd}[2]{\big\langle#1,#2\big\rangle}
\newcommand{\pnorm}[1]{\| #1 \|_{L^p(\Omega)}}
\renewcommand{\d}{\ \mathrm d}

\usepackage{tikz}

\begin{document}
\maketitle

\begin{tikzpicture}[remember picture, overlay]
 \node [xshift=-1cm,yshift=15cm,rotate=-90] at (current page.south east)
 {Electronic Journal of Differential Equations (2021), \href{https://ejde.math.txstate.edu/}{https://ejde.math.txstate.edu/}
 };
\end{tikzpicture}

\textit{AMS 2010 MSC:}{35B40, 35L70}
\textit{Keywords:}{$p$-Laplace, telegraph equation, asymptotic behavior, convexity}

\begin{abstract}
In this note, we study the asymptotic behavior, as $t$ tends to infinity,  of the solution $u$ to the evolutionary  damped $p$-Laplace equation
\begin{equation*}
  u_{tt}+a\, u_t =\Delta_p u
\end{equation*}
 with  Dirichlet boundary values.  Let  $u^*$ denote the stationary solution with same boundary values, then we prove  the $W^{1,p}$-norm of $u(t) - u^{*}$ decays for large $t$ like $t^{-\frac{1}{(p-1)p}}$,  in the
degenerate case   $ p > 2$.
\end{abstract}

\numberwithin{equation}{section}

\allowdisplaybreaks

\section{Introduction and problem setting}
Let $\Omega\subset\R^n$ be a bounded Lipschitz domain, $p\geq2$ and $g\in W^{1,p}(\Omega)$. Consider the  minimization of  the following  functional
\begin{equation}\label{eq:BL:min}
\EuScript{E}(u) \coloneqq\frac1p\int_\Omega|\nabla u|^p \d x,
\end{equation}
over  the class $\EuScript{C}\coloneqq\{u:  u-g\in W^{1,p}_0(\Omega)\}$. The minimizer  denoted by  $u^*(x)$ satisfies the following Euler-Lagrange-equation in the weak sense:
\begin{equation}\label{eq:BL:stationary}
 \begin{cases}
  -\Delta_p u^{*} &= 0 \quad \text{in $\Omega$,}\\
  \hfill u^{*} &= g \quad \text{on $\partial\Omega$.}
 \end{cases}
\end{equation}

The first order flow  of  $\EuScript{E}(v)$,  i.e.  $v_t +\partial_{v} \EuScript{E}(v) = 0,$  can be considered as a classical steepest descent flow for solving the  minimization problem \eqref{eq:BL:min}.  In the degenerate case $p>2$ the authors of \cite{BL:Lindqvist} obtained  the sharp decay rate
\[
\sup_{x\in\Omega} |v(t,x)-u^{*}(x)| = O\left(t^{-\frac{1}{p-2}}\right) \quad \text{as $t\to\infty$.}
\]
Their proof  is based on the Moser iteration   applied  to the difference $v(t,x)-u^{*}(x)$, which
itself is not a solution, thus bounding the $ L^\infty$-norm in terms of the $ L^p$-norm.

It is well known, that an improvement in the convergence rate may be gained by considering the corresponding second order damped problem, cf. \cite{BL:Frankel,BL:Polyak,BL:BSGZ1} and references therein.  Moreover, second order damped problems naturally appear in modeling mechanical systems. For instance, the motion of a material point with positive mass sliding on a profile defined by a function $\Phi$ under the action of the gravity force, the reaction force, and the friction force can asymptotically be approximated by the following second order dynamical system
\begin{equation}
\ddot{x}(t) + \lambda\dot{x}(t)+\nabla \Phi(x(t))=0                                                                                                                                                                                                                                                                                                                                                                                                                                                                                                                                                                                           \end{equation}
called \emph{heavy ball with friction system (HBF)}, cf. \cite{BL:AGR}.  We refer to \cite{BL:GOOZ} and \cite{BL:ENEO} to see numerical algorithms based on the HBF system for  solving some special problems, e.g. large systems of linear equations, eigenvalue problems, nonlinear Schr\"odinger problems, inverse source problems, and  ill-posed problems. In \cite{BL:ENEO} the authors have shown advantages and superior convergence properties of such a dynamical functional particle method compared to  a first order dynamical system, and also to several other iterative methods. So, it's hardly surprising that second order dynamical equations play an important role in acceleration for convergence to steady state solutions. In fact, the power of the use of the damped $p$-Laplace equation in image denoising was investigated in \cite{BL:BSGZ1}. However, an analysis as in \cite{BL:Lindqvist} of the asymptotic behavior, as  $t\to\infty$, of the solutions to a damped $p$-Laplace equation was not done so far.

Our  purpose here is to obtain the decay rate for large time of $u-u^*$ where $u$ denotes a solution to the evolutionary damped $p$-Laplace equation\footnote{The question of existence of solutions will be the subject of a forthcoming note.}, namely:

 \newcommand{\negphantom}[1]{\settowidth{\dimen0}{$#1$}\hspace*{-\dimen0}}

\begin{equation}\label{eq:BL:telegraph}
 \begin{cases}
  u_{tt}+a\, u_t &=\Delta_p u\hphantom{u_{0}(x)}\negphantom{\Delta_p u} \quad \text{in $(0,\infty)\times\Omega$,}\\
  \hfill u(0,x)&=u_{0}(x) \quad\text{in $\{0\}\times\Omega$,}\\
  \hfill u_t(0,x)&=0\hphantom{u_{0}(x)}\negphantom{0} \quad \forall x\in\Omega,\\
  \hfill u(t,x)&=g(x)\hphantom{u_{0}(x)}\negphantom{g(x)} \quad \text{on $[0,\infty)\times\partial\Omega$,}
 \end{cases}
\end{equation}
wherein $a>0$ is constant and $u_0\in W^{1,p}(\Omega)$, such that $u_0-g\in W^{1,p}_0(\Omega)$.

It is clear, that the solution of the damped equation \eqref{eq:BL:telegraph} behaves for large time like the stationary solution of \eqref{eq:BL:stationary}. Moreover, we show  the following rate of decay for the $W^{1,p}$-norm of their difference:
\begin{theorem}\label{thm:BL:main}
 Let $p\geq2$, $u^*$ denote a solution to \eqref{eq:BL:stationary} and $u$ a solution to \eqref{eq:BL:telegraph}. For large time we have
 \begin{equation*}
  \|u-u^*\|_{W^{1,p}(\Omega)} \leq C \cdot t^{-\frac{1}{(p-1)p}},
 \end{equation*}
 with a constant $C=C(p,\Omega,u_0,a)>0$.
\end{theorem}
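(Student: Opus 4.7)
My plan is to construct a modified Lyapunov functional whose decay can be quantified via an ODE-type differential inequality, in the spirit of the Haraux--Jendoubi theory for dissipative evolution equations. First I would multiply \eqref{eq:BL:telegraph} by $u_t$ and, using that $u_t=0$ on $\partial\Omega$ (since $u=g$ there, independent of $t$), obtain the standard energy identity: with $E(t):=\frac12\|u_t\|_{L^2(\Omega)}^2+\EuScript{E}(u(t))-\EuScript{E}(u^*)\ge 0$ one has $E'(t)=-a\|u_t\|_{L^2(\Omega)}^2\le 0$. Next, strong monotonicity of the $p$-Laplacian for $p\ge2$, together with the weak Euler--Lagrange equation \eqref{eq:BL:stationary} tested against $w:=u-u^*\in W^{1,p}_0(\Omega)$, yields the coercivity estimate
\[
 \int_\Omega|\nabla u|^{p-2}\nabla u\cdot\nabla w\d x\;\ge\; c_p\,\|\nabla w\|_{L^p(\Omega)}^p,
\]
and in particular $\EuScript{E}(u)-\EuScript{E}(u^*)\ge\frac{c_p}{p}\|\nabla w\|_{L^p}^p$. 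Testing \eqref{eq:BL:telegraph} itself with $w$ therefore gives $(u_{tt},w)_{L^2}+a(u_t,w)_{L^2}\le -c_p\|\nabla w\|_{L^p}^p$.

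For a small parameter $\mu\in(0,a)$ I would introduce the modified functional
\[
 \Psi(t):= E(t)+\mu\,(u_t,w)_{L^2}+\frac{a\mu}{2}\|w\|_{L^2}^2,
\]
where the quadratic term in $w$ is inserted precisely so that the time-derivative of the $\mu$-correction, namely $\mu(u_{tt},w)+\mu\|u_t\|^2+a\mu(u_t,w)$, absorbs the cross term $a\mu(u_t,w)$ through the test identity above. A short Cauchy--Schwarz--Young calculation then shows that for $\mu$ sufficiently small one has simultaneously $\Psi(t)\ge c\bigl(\|u_t\|_{L^2}^2+\|\nabla w\|_{L^p}^p\bigr)\ge 0$ and
\[
 \Psi'(t)\;\le\;-c_1\|u_t\|_{L^2}^2-c_2\|\nabla w\|_{L^p}^p.
\]

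To convert this into the claimed polynomial rate I need a compatible upper bound on $\Psi$. A Taylor expansion of the uniformly convex $p$-energy around $u^*$ gives $\EuScript{E}(u)-\EuScript{E}(u^*)\le C\bigl(\|\nabla w\|_{L^p}^2+\|\nabla w\|_{L^p}^p\bigr)$, while Poincaré followed by Hölder on the bounded domain provides $\|w\|_{L^2}^2\le C\|\nabla w\|_{L^p}^2=C(\|\nabla w\|_{L^p}^p)^{2/p}$. In the regime of interest $\|\nabla w\|_{L^p}$ is small, so the sub-linear power $(\|\nabla w\|_{L^p}^p)^{2/p}$ dominates the linear $\|\nabla w\|_{L^p}^p$, yielding
\[
 \Psi(t)\;\le\; C\bigl(\|u_t\|_{L^2}^2+(\|\nabla w\|_{L^p}^p)^{2/p}\bigr).
\]
A case distinction according to whether the kinetic or the potential term dominates $\Psi$ then turns the derivative estimate together with this upper bound into a differential inequality of the form $\Psi'(t)\le -C\,\Psi(t)^p$ for $t$ sufficiently large. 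Integration gives $\Psi(t)\le C' t^{-1/(p-1)}$, and the lower bound $\Psi\ge c\|\nabla w\|_{L^p}^p$ concludes $\|u-u^*\|_{W^{1,p}_0(\Omega)}\le C''\,t^{-1/(p(p-1))}$, as claimed.

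The main technical hurdle lies in this very last step. For $p=2$ the upper and lower bounds on $\Psi$ coincide up to constants and one gets exponential decay, but for $p>2$ the degeneracy of $\Delta_p$ produces a genuine mismatch between the natural convexity exponent $p$ appearing in the dissipation and the quadratic scaling inherited from the $L^2$-Poincaré inequality; it is precisely this mismatch that forces a polynomial rate, and tracking the exponents through the case distinction (together with choosing $\mu$ small enough to make several Young inequalities compatible simultaneously) is where the exponent $1/(p(p-1))$ finally crystallises.
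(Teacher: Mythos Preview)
Your modified Lyapunov functional $\Psi$ is, up to the choice of the coupling parameter (the paper effectively fixes $\mu=a$), the same error functional $\mathrm{e}(t)$ of \eqref{eq:BL:errordef}, and the overall scheme --- combine the $u_t$-test (energy identity) and the $w$-test (monotonicity \eqref{eq:BL:VectIne}) to obtain $\Psi'\le -c\bigl(\|u_t\|_{L^2}^2+\|\nabla w\|_{L^p}^p\bigr)$, bound $\Psi$ from above by the same quantities, and close a scalar differential inequality --- is exactly the paper's route.

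There is, however, a slip in your final exponent accounting. Set $A:=\|u_t\|_{L^2}^2$ and $B:=\|\nabla w\|_{L^p}^p$; your bounds read $-\Psi'\ge c(A+B)$ and $\Psi\le C(A+B^{2/p})$. The case distinction then gives: if $A\ge B^{2/p}$ one has $-\Psi'\ge cA\ge c'\Psi$; if $A<B^{2/p}$ then $\Psi\le 2CB^{2/p}$, hence $B\ge c''\Psi^{p/2}$ and $-\Psi'\ge cB\ge c'''\Psi^{p/2}$. Either way, for bounded $\Psi$ one obtains $\Psi'\le -C\,\Psi^{p/2}$, \emph{not} $\Psi'\le -C\,\Psi^{p}$. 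Integration yields $\Psi(t)\le C\,t^{-2/(p-2)}$ for $p>2$, whence $\|\nabla w\|_{L^p}\le C\,t^{-2/(p(p-2))}$, which is strictly stronger than the stated $t^{-1/(p(p-1))}$; the theorem is of course still proved.

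The reason your computation overshoots the paper's rate is the one genuinely different ingredient: your second-order Taylor bound $\EuScript{E}(u)-\EuScript{E}(u^*)\le C\,M^{p-2}\|\nabla w\|_{L^p}^2$ (valid since $x\mapsto|x|^p$ has Hessian $\lesssim(|a|+|b|)^{p-2}$, then H\"older with exponents $p/(p-2)$ and $p/2$) is sharper than the paper's first-order estimate \eqref{eq:BL:Rudin}, which only yields $\EuScript{E}(u)-\EuScript{E}(u^*)\le C\,M^{p-1}\|\nabla w\|_{L^p}$. That extra $B^{1/p}$ term in the paper's upper bound is precisely what forces the exponent $p$ in the paper's differential inequality and produces the rate $t^{-1/(p(p-1))}$ that you were aiming for.
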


Our proof is based on a careful analysis of the following error term:
\begin{equation}\label{eq:BL:errordef}
 \mathrm{e}(t)\coloneqq\int_\Omega \frac{a^2}{2}w^2+a\,w\,w_t + w_t^2+2\cdot\left(\frac1p|\nabla u|^p-\frac1p|\nabla u^*|^p\right)\d x
\end{equation}
where we have set $w=u-u^*$. Note that our error term is chosen in such a way that it is compatible to our problem and we can estimate the error in terms of its derivative. Moreover, the fact
\[
  \frac{\d}{\d t}\int_\Omega\frac1p|\nabla u|^p \d x = -  \int_\Omega   u_t\,  \Delta_p u \d x,
\]
cf. page \pageref{eq:BL:integrated}, justifies the appearance of the last term in the error.  It is worth mentioning that with our argumentation scheme we can improve the decay rate in the linear case $p=2$ and obtain the classical result from \cite{BL:HZ}, cf. the discussion in section \ref{sec:BL:improved}.

\section{Basic  results}

Let us briefly introduce the notations used throughout this work.
The Euclidean norm  in  $\mathbb{R}^n$ is denoted by $|\cdot|$, a generic positive constant is  represented  by capital or small letter $c$ possibly varying from line to line, and we often write $u(t)(x)$ for $u(t,x)$.

 Given a real Banach space $X$, the (Banach) space $L^{p}(0,T; X)$  consists of
 all measurable
 functions $u:[0,T]\to X$ such that
\[
\|u\|_{L^p(0,T;X)}=\left(\int_{0}^{T} \|u(t)\|_{X}^{p}\, \d t\right)^{\frac{1}{p}}
<\infty\, , \qquad  1\leq p<\infty\, ,
\]
 $L^{\infty}(0,T; X)$  is the space of all measurable
 $u:[0,T]\to X$ such that
\[
\|u\|_{L^{\infty}(0,T; X)}=\underset{t\in [0,T]}{\text{ess sup}} \|u(t)\|_{X}.
<\infty.
\]
The Banach space $W^{1,p}(0,T; X)$, for $1\leq p  \leq \infty$, consists of all $u\in L^{p}(0,T; X)$ such that $\partial_{t}u$ exists in the weak sense and belongs to $L^{p}(0,T; X)$.

Recall that for $u\in W^{1,p}(0,T; X)$ we have $u\in C^0([0, T]; X)$ and
$$\max_{0\le t\le T}\|u(t)\|_{X}\leq c(T)\cdot \|u\|_{W^{1,p}(0,T; X)}.$$

For further reading and elaborated clarifications on spaces involving time,  we refer the reader to \cite[Sec. 5.9.2]{BL:Evans}.

Throughout this work, we make use of the following inequalities:
\begin{itemize}
\item let $p\geq 2$. For all $a,b\in \R^n$ we have
\begin{equation}\label{eq:BL:VectIne}
 2^{2-p}|a-b|^p\leq \skalarProd{|a|^{p-2}a-|b|^{p-2}b}{a-b}, \tag{A1}
\end{equation}

\item
for $p\ge 2$ and with an adequate constant $c(p)\in(0,1]$:
\begin{equation}\label{eq:BL:VectIne2}
 |b|^p\geq |a|^p+p\skalarProd{|a|^{p-2}a}{b-a}+c(p)|b-a|^p, \tag{A2}
\end{equation}

\item
 furthermore, for ~ $\pnorm{f}\leq M$ ~ and ~ $\pnorm{g}\leq M$ ~ the estimate
\begin{equation}\label{eq:BL:Rudin}
 \int_\Omega \left | \vphantom{x^{x^{x^x}}}|f|^p -|g|^p\right| \d x \leq c(p,\Omega) M^{p-1} \pnorm{f-g} \tag{A3}
\end{equation}
holds, cf. \cite[p.~75]{BL:Rudin}.
\end{itemize}

Firstly, let us define the concept of weak  solutions to the evolutionary damped $p$-Laplace equation:
\begin{definition}
We say that  $u  \in W^{1,p}_{\textrm{loc}}( 0,\infty ; W^{1,p}(\Omega)) $  is a solution to
 \begin{equation*}
  u_{tt}+a\, u_t =\Delta_p u
\end{equation*}
 if
\[
 \int_{0}^{\infty} \int_\Omega  - u_{t}\, \phi_{t}-  a\,  u\, \phi_{t} +  |\nabla u|^{p-2}\skalarProd{\nabla u}{\nabla \phi}\d x \d t =0,
\]
for each  $\phi \in C^{\infty}_{0}( (0, \infty) \times \Omega).$
\end{definition}

In the following, let us denote by $u^*$ a solution to \eqref{eq:BL:stationary} and by $u$ a solution to \eqref{eq:BL:telegraph}. Moreover, we set
\[
 E(t)\coloneqq\int_\Omega\frac12\,u_t^2(t,x) + \frac1p\,|\nabla u(t,x)|^p  \d x.
\]
\begin{corollary}
 $E(\cdot)$ is non-increasing, or rather in the weak sense we have
 \begin{equation}\label{eq:BL:Lemma1}
  E'(t)= -a \int_\Omega u_t^2 \d x.
 \end{equation}
  \end{corollary}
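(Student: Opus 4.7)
The plan is to differentiate $E(t)$ under the integral sign and exploit the equation. Formally,
\begin{equation*}
  E'(t) = \int_\Omega u_t\,u_{tt} + |\nabla u|^{p-2}\skalarProd{\nabla u}{\nabla u_t}\d x.
\end{equation*}
Since the Dirichlet datum $g$ on $\partial\Omega$ does not depend on $t$, the time derivative $u_t$ lies in $W^{1,p}_0(\Omega)$ for (a.e.) $t>0$, so an integration by parts converts the second term into
\begin{equation*}
  \int_\Omega |\nabla u|^{p-2}\skalarProd{\nabla u}{\nabla u_t}\d x = -\int_\Omega u_t\,\Delta_p u\d x,
\end{equation*}
which is precisely the identity announced on page~\pageref{eq:BL:integrated}. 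Substituting the PDE $u_{tt} = \Delta_p u - a\,u_t$ leaves only
\begin{equation*}
  E'(t) = \int_\Omega u_t\,\bigl(u_{tt}-\Delta_p u\bigr)\d x = -a\int_\Omega u_t^2 \d x,
\end{equation*}
which is the claimed formula.

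The only subtle point — and really the sole obstacle — is the regularity needed to justify the two formal steps: differentiation under the integral sign, and testing the weak equation against $u_t$ itself (which is not a compactly supported test function, as the definition requires). The standard way around this is an approximation argument: one mollifies in time (Steklov averages) or passes to a Galerkin approximation in space, proves the analogue of the identity there, and then takes the limit using the assumed regularity $u\in W^{1,p}_{\mathrm{loc}}(0,\infty;W^{1,p}(\Omega))$ together with $u-g\in W^{1,p}_0(\Omega)$. Once this energy identity is in hand, the monotonicity $E'(t)\leq 0$ is immediate from $a>0$.
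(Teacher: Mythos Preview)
Your argument is correct and essentially identical to the paper's: multiply the equation by $u_t$, integrate by parts using that $u_t$ vanishes on the boundary, and read off $E'(t)=-a\int_\Omega u_t^2\,\mathrm{d}x$. The paper phrases it as multiplying the PDE by $u_t$ rather than differentiating $E$ first, and defers the rigor to a one-line remark, whereas you make the approximation issue explicit; but the content is the same.
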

\begin{proof}
 A multiplication of
$$
u_{tt}+a\, u_t =\Delta_p u
$$
with $u_t$ followed by an integration over $\Omega$ gives

\begin{equation} \label{eq:BL:integrated}
\int_{\Omega}   u_{tt} \, u_t \d x -\int_{\Omega} (\Delta_p u )    \, u_t \d x = -a   \int_{\Omega}   {u_t^2}  \d x .
\end{equation}
Moreover, an integration by parts  yields
\[
 -  \int_\Omega   u_t\,  \Delta_p u \d x = \frac{\d}{\d t}\int_\Omega\frac1p|\nabla u|^p \d x,
\]
note that there is no time dependence of $u_t$ on the boundary.  In view of
\[
\int_{\Omega}   u_{tt} \, u_t \d x= \frac{1}{2} \frac{\d}{\d t} \int_{\Omega}  u_{t}{^2} \d x,
\]
we combine the last two equalities with \eqref{eq:BL:integrated} to achieve the desired relation \eqref{eq:BL:Lemma1}:
\begin{equation*}
E'(t)=\frac{\d}{\d t} \int_{\Omega} \frac12{u_t^2} \d x+  \frac{\d}{\d t}\int_\Omega\frac1p|\nabla u|^p \d x=-a \int_\Omega u_t^2 \d x.\qedhere
 \end{equation*}
\end{proof}

\begin{remark}
 The above considerations were formal and can all be made rigorous, cf. e.g. \cite[p. 156ff]{BL:Wu}.
\end{remark}

In view of \eqref{eq:BL:Lemma1}, we show that the gradient of $u$ (with respect to space) is bounded by the initial data and that $u_t$ tends to zero for big times:

\begin{corollary}\label{cor:BL:ut} Let $u$ be a solution to \eqref{eq:BL:telegraph}. Then:
 \begin{enumerate}[a)]
\item We have \quad $\|u_t(T)\|_{L^2(\Omega)}\xrightarrow{T\to\infty}0$.
\item For all $T\geq 0$ it  holds \quad $ \|\nabla u(T) \|_{L^p(\Omega)} \leq \pnorm{\nabla u_0}.$
 \end{enumerate}
 \end{corollary}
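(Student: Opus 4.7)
Both parts descend from the dissipation identity \eqref{eq:BL:Lemma1} combined with the initial condition $u_t(0,\cdot)\equiv 0$.

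For (b), the initial condition gives $E(0) = \tfrac{1}{p}\pnorm{\nabla u_0}^p$; monotonicity of $E$ and non-negativity of the kinetic term yield
\[
\tfrac{1}{p}\pnorm{\nabla u(T)}^p \;\leq\; E(T) \;\leq\; E(0) \;=\; \tfrac{1}{p}\pnorm{\nabla u_0}^p,
\]
and taking $p$-th roots proves (b).

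For (a), integrating \eqref{eq:BL:Lemma1} on $[0,T]$ gives
\[
a\int_0^T \|u_t(s)\|_{L^2(\Omega)}^2 \d s \;=\; E(0) - E(T) \;\leq\; E(0),
\]
so $\|u_t(\cdot)\|_{L^2(\Omega)}^2 \in L^1(0,\infty)$. Since $E$ is non-increasing and bounded below, $E_\infty \coloneqq \lim_{T\to\infty} E(T)$ exists, hence for every $\tau>0$
\[
a\int_T^{T+\tau} \|u_t(s)\|_{L^2(\Omega)}^2 \d s \;=\; E(T)-E(T+\tau) \;\xrightarrow{T\to\infty}\; 0,
\]
which already forces $\liminf_{T\to\infty}\|u_t(T)\|_{L^2(\Omega)} = 0$.

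The delicate step is upgrading this sliding-window decay to the pointwise statement claimed in (a). The natural route is a Barbalat-type argument, which requires uniform continuity of the map $t \mapsto \|u_t(t)\|_{L^2(\Omega)}^2$. Differentiating it and using the equation,
\[
\tfrac{d}{dt}\|u_t\|_{L^2(\Omega)}^2 \;=\; -2\int_\Omega|\nabla u|^{p-2}\skalarProd{\nabla u}{\nabla u_t}\d x \;-\; 2a\|u_t\|_{L^2(\Omega)}^2,
\]
so the task reduces to a uniform bound on the first integral; by H\"older it is controlled by $\pnorm{\nabla u}^{p-1}\pnorm{\nabla u_t}$, and (b) already supplies the factor $\pnorm{\nabla u}^{p-1}$, leaving a bound on $\pnorm{\nabla u_t}$ to be squeezed out of the assumed $W^{1,p}_{\mathrm{loc}}$-regularity of the solution. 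This last piece is the main obstacle, and the only point where one must step beyond pure energy-dissipation bookkeeping.
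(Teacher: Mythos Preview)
Your treatment of (b) is the paper's: integrate the dissipation identity, drop non-negative terms, done.

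For (a) you are actually \emph{more} careful than the paper. The paper records the integrated energy inequality
\[
\tfrac12\|u_t(T)\|_{L^2}^2 + \tfrac1p\|\nabla u(T)\|_{L^p}^p + a\int_0^T\|u_t(\tau)\|_{L^2}^2\d\tau \;\le\; \tfrac1p\|\nabla u_0\|_{L^p}^p
\]
and then simply says ``the statement follows with $T\to\infty$'', without addressing the point you correctly flag: $\|u_t\|_{L^2}^2\in L^1(0,\infty)$ alone does not give $\|u_t(T)\|_{L^2}\to0$. Your Barbalat reduction to a uniform-in-time bound on $\|\nabla u_t\|_{L^p}$ is sound, but that bound does \emph{not} follow from the assumed $W^{1,p}_{\mathrm{loc}}$-in-time regularity (``loc'' buys nothing uniform), so the obstacle you name is real --- and the paper has not removed it either, only suppressed it.

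If you want to close the gap without Barbalat: the proof of the later Lemma~\ref{lem:BL:convergenceGrad} uses only Remark~\ref{rem:BL:reg} (i.e.\ the energy inequality), not part (a) itself, and delivers $\tfrac1p\|\nabla u(T)\|_{L^p}^p\to\tfrac1p\|\nabla u^*\|_{L^p}^p$. Since $E(T)\to E_\infty$ by monotonicity, $\tfrac12\|u_t(T)\|_{L^2}^2=E(T)-\tfrac1p\|\nabla u(T)\|_{L^p}^p$ then has a limit, and integrability forces that limit to be $0$. This costs only a reordering of the paper's logic.
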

\begin{proof}

Integrating  \eqref{eq:BL:Lemma1} over $(0,T)$ we gain

\begin{align}\label{sun3}
  \int_{\Omega} \frac12u_t^2(T,x) \d x+   \int_\Omega\frac1p|\nabla u(T, x)|^p \d x  + a \int_{0}^{T}\int_\Omega & u_t^2(\tau,x)  \d x \d \tau \notag\\
   &\le  \int_\Omega\frac1p|\nabla u_{0}(x)|^p\d x.
 \end{align}
Note that the right hand  side of inequality \eqref{sun3} is independent of $T$, hence, the statement follows with $T\to\infty$.
\end{proof}

\begin{remark}\label{rem:BL:reg}
Taking the essential supremum with respect to time on both sides of \eqref{sun3} shows
\[
u_t \in L^{\infty}( 0, \infty; L^{2}(\Omega)), \quad \text{and}\quad u \in L^{\infty}( 0, \infty; W^{1,p}(\Omega)). \qedhere
\]
\end{remark}

Recall that $u^*$ minimizes  $ \EuScript{E}(\cdot) $. Hence, Corollary \ref{cor:BL:ut} ensures the boundedness of the gradients of both $u$ and $u^*$,  more precisely
\begin{equation}\label{eq:BL:gradientbound}
 \pnorm{\nabla u^*}\le\pnorm{\nabla u}\le M
\end{equation}
where we have set $M\coloneqq\pnorm{\nabla u_0}$.

Next, let us focus on the behavior of the energies. Since for large time the dependence of $u$ on time shrinks, cf. Cor. \ref{cor:BL:ut}, the convergence of energies should follow from the uniqueness of $p$-harmonic functions, and indeed, we have the following

\begin{lemma}\label{lem:BL:convergenceGrad}
  Let $u^{*}$  and    $u$ be solutions  of  \eqref{eq:BL:stationary} and \eqref{eq:BL:telegraph}, respectively, then
  \[
  \EuScript{E}(u)\xrightarrow{t\to \infty}\EuScript{E}(u^*).
  \]
\end{lemma}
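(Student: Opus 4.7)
The plan is to combine the energy monotonicity from Corollary~\ref{cor:BL:ut} with weak compactness and lower semicontinuity. Since $E(t) = \EuScript{E}(u(t)) + \tfrac12\|u_t(t)\|_{L^2(\Omega)}^2$ is non-increasing and $u(t) \in \EuScript{C}$ gives $E(t) \geq \EuScript{E}(u(t)) \geq \EuScript{E}(u^*)$, the function $E(t)$ converges to some finite limit $L \geq \EuScript{E}(u^*)$. Combined with $\|u_t(t)\|_{L^2(\Omega)} \to 0$, this yields $\EuScript{E}(u(t)) \to L$, so it suffices to establish the reverse inequality $L \leq \EuScript{E}(u^*)$.

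Using the uniform bound $\|\nabla u(t)\|_{L^p(\Omega)} \leq M$ from \eqref{eq:BL:gradientbound}, I would pick a sequence $T_n \to \infty$ along which $u(T_n) \rightharpoonup \tilde u$ weakly in $W^{1,p}(\Omega)$. Since $u(T_n) - g \in W^{1,p}_0(\Omega)$ and this subspace is weakly closed, $\tilde u \in \EuScript{C}$. Weak lower semicontinuity of $\EuScript{E}$ on $W^{1,p}(\Omega)$ yields $\EuScript{E}(\tilde u) \leq \liminf_n \EuScript{E}(u(T_n)) = L$, while the minimizer property gives $\EuScript{E}(\tilde u) \geq \EuScript{E}(u^*)$. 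Hence everything reduces to showing $\tilde u = u^*$, which, by uniqueness of the $p$-harmonic function with boundary datum $g$, amounts to verifying that $\tilde u$ is itself $p$-harmonic.

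To do this, I would test the weak formulation of \eqref{eq:BL:telegraph} against a time-independent $\eta \in W^{1,p}_0(\Omega)$ on the strip $[T_n, T_n+1]$, which formally produces
\[
\Big[\int_\Omega (u_t + a u)\,\eta \d x\Big]_{T_n}^{T_n+1} + \int_{T_n}^{T_n+1}\!\!\int_\Omega |\nabla u|^{p-2}\skalarProd{\nabla u}{\nabla \eta}\d x \d t = 0.
\]
The boundary-in-time bracket vanishes as $n \to \infty$: the $u_t$-contribution because $\|u_t(T_n)\|_{L^2(\Omega)} \to 0$, and the $u$-contribution because $u(T_n+1)-u(T_n) = \int_{T_n}^{T_n+1} u_t\d t$ is $L^2$-small thanks to $\int_0^\infty \|u_t\|_{L^2(\Omega)}^2\d t < \infty$ from \eqref{sun3}. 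The main obstacle is to pass to the limit in the nonlinear gradient term under only weak $W^{1,p}$ convergence of $u(T_n+\cdot)$; this I would handle by Rellich compactness (producing strong $L^p$-convergence of $u(T_n+\cdot)$ to $\tilde u$, uniformly in $s \in [0,1]$) together with a Minty-style monotonicity argument based on \eqref{eq:BL:VectIne}. The outcome is $\int_\Omega |\nabla \tilde u|^{p-2}\skalarProd{\nabla \tilde u}{\nabla \eta}\d x = 0$ for every $\eta \in W^{1,p}_0(\Omega)$, i.e.\ $\tilde u = u^*$, forcing $L = \EuScript{E}(u^*)$ as desired.
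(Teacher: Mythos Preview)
Your reduction step contains a genuine logical gap. You correctly obtain $\EuScript{E}(u(t))\to L\ge\EuScript{E}(u^*)$ and, via weak lower semicontinuity, $\EuScript{E}(\tilde u)\le L$ for any weak subsequential limit $\tilde u\in\EuScript{C}$. But then identifying $\tilde u=u^*$ only yields $\EuScript{E}(u^*)=\EuScript{E}(\tilde u)\le L$, which is the inequality you already had --- not the reverse inequality $L\le\EuScript{E}(u^*)$ that you need. Weak convergence $u(T_n)\rightharpoonup u^*$ in $W^{1,p}$ does \emph{not} force $\EuScript{E}(u(T_n))\to\EuScript{E}(u^*)$; for that you would need strong convergence of the gradients. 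So the sentence ``$\tilde u=u^*$, forcing $L=\EuScript{E}(u^*)$'' is simply false as written. To rescue the approach you would have to upgrade to strong $W^{1,p}$ convergence along the subsequence. In principle this could emerge from a Minty argument combined with the strong monotonicity \eqref{eq:BL:VectIne}, but your sketch only aims at identifying the weak limit of the flux, and in the time-averaged hyperbolic setting the key ingredient
\[
\limsup_{n\to\infty}\int_{T_n}^{T_n+1}\!\!\int_\Omega|\nabla u|^{p-2}\skalarProd{\nabla u}{\nabla u}\d x\d t\ \le\ \int_\Omega\skalarProd{\chi}{\nabla\tilde u}\d x
\]
(with $\chi$ the weak $L^{p'}$ limit of the time-averaged flux) is far from obvious and is not supplied.

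The paper bypasses compactness altogether, following Alvarez: for an arbitrary competitor $v\in\EuScript{C}$ it introduces $\varphi(t)=\tfrac12\|u(t)-v\|_{L^2(\Omega)}^2$, uses the equation and the convexity inequality \eqref{eq:BL:VectIne2} to bound $\varphi''+a\,\varphi'$ from above by $\EuScript{E}(v)-E(T)+\tfrac32\|u_t\|_{L^2}^2$, integrates twice in time, and divides by a factor growing like $T$. Since $\varphi\ge0$ and the $u_t$ contribution stays bounded by \eqref{sun3}, this directly gives $\limsup_{t\to\infty}\EuScript{E}(u(t))\le\EuScript{E}(v)$, hence the reverse inequality without ever passing to weak limits.
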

\begin{proof}
Since  $u^*$ is the unique minimizer  of  $\EuScript{E}(\cdot)$, it suffices to show that
 \begin{equation} \label{eq:BL:AlvarezGoal}
\limsup_{t\to\infty} \frac1p\int_\Omega|\nabla u|^p\d x \leq \frac1p\int_\Omega|\nabla v|^p\d x
 \end{equation}
for all $v$ such that $v-g\in W^{1,p}_0(\Omega)$. For that purpose we will basically follow the proof of Theorem 2.1 from \cite{BL:Alvarez}:

Let $v\in W^{1,p}(\Omega)$ with $v-g\in W^{1,p}_0(\Omega)$ be given. Consider the following auxiliary function
\[
\varphi(t)\coloneqq \frac12\int_\Omega \left(u(t,x)-v(x)\right)^2\d x.
\]
Then $\varphi\in W^{2,1}(0,\infty)$, cf. Remark \ref{rem:BL:reg}, and, as $u$ fulfills \eqref{eq:BL:telegraph}, we have
\begin{align*}
 \varphi''(t)+a\,\varphi'(t)&=\int_\Omega (u-v)\,\Delta_p u +u_t^2\d x= -\int_\Omega |\nabla u|^{p-2} \skalarProd{\nabla u}{\nabla u - \nabla v}+u_t^2\d x \\
 &\overset{\eqref{eq:BL:VectIne2}}{\leq}\int_\Omega \frac1p|\nabla v|^p-\frac1p|\nabla u|^p+u_t^2\d x\leq \int_\Omega \frac1p|\nabla v|^p+\frac32u_t^2\d x -E(T)
\end{align*}
for all $t\in[0,T]$, where we have used that $E(\cdot)$ is non-increasing. A multiplication of both sides with $e^{at}$, followed by an integration yields
\begin{align*}
\varphi'(t)\le  e^{-at} \varphi'(0) &+ \frac{1}{a}(1- e^{-at}) \left(\int_\Omega \frac1p|\nabla v|^p\d x- E(T)\right)\\
&+ \frac{3}{2}  \int_0^t\int_\Omega e^{-a(t-\tau)}{u_t^2(\tau,x)}\d x \d\tau.
\end{align*}
Integrating once more and using the fact that $\displaystyle E(T)\ge \frac1p\int_\Omega|\nabla u|^p\d x$,  implies
\begin{align}\label{eq:BL:Alvarez}
 &\varphi(T)+\frac{1}{a^2}\left(aT-1 +e^{-aT}\right)\frac1p\int_\Omega|\nabla u|^p\d x\notag\\
 &\leq \frac{1}{a^2}\left(aT-1+e^{-aT}\right)\frac1p\int_\Omega|\nabla v|^p\d x\
  + \varphi(0)+\frac1a(1-e^{-aT})\,\varphi'(0)+h(T)
 \end{align}
where we have set
\begin{align*}
 h(T)&\coloneqq\frac{3}{2}\int_0^T\int_0^t\int_\Omega e^{-a(t-\tau)}{u_t^2(\tau,x)}\d x \d \tau \d t\\
 &=\frac{3}{2a}\int_0^T\int_\Omega {u_t^2(\tau,x)}(1-e^{-a(T-\tau)})\d x\d \tau.
\end{align*}
Due to Remark \ref{rem:BL:reg} the term $h(T)$ is bounded. Hence, dividing \eqref{eq:BL:Alvarez} by  the expression $\displaystyle \frac{1}{a^2}\left(aT-1 +e^{-aT}\right)$ and letting $T\to\infty$ gives the desired estimate \eqref{eq:BL:AlvarezGoal}.
\end{proof}

On account of the convergence of the energies, we get the $W^{1,p}$ convergence of $u$ to $u^*$:

\begin{corollary}\label{cor:BL:u}
Let $u$ and $u^*$ be as before, then we have $$\|{u-u^*}\|_{W^{1,p}(\Omega)}\xrightarrow{t\to\infty}0.$$
 \end{corollary}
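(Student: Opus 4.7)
The plan is to turn the energy convergence $\EuScript{E}(u)\to\EuScript{E}(u^*)$ of Lemma~\ref{lem:BL:convergenceGrad} into an $L^p$-gradient estimate by exploiting the strict convexity of $\xi\mapsto\tfrac1p|\xi|^p$ encoded in \eqref{eq:BL:VectIne2}. Applied pointwise with $a=\nabla u^*(x)$ and $b=\nabla u(t,x)$ and then integrated over $\Omega$, this inequality yields
\[
 p\,\EuScript{E}(u)-p\,\EuScript{E}(u^*)\geq p\int_\Omega\skalarProd{|\nabla u^*|^{p-2}\nabla u^*}{\nabla u-\nabla u^*}\d x+c(p)\int_\Omega|\nabla(u-u^*)|^p\d x.
\]

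Next I would argue that the cross term vanishes: since $u(t,\cdot)$ and $u^*$ share the Dirichlet datum $g$, the difference $u(t,\cdot)-u^*$ belongs to $W^{1,p}_0(\Omega)$ and is an admissible test function in the weak Euler--Lagrange equation for \eqref{eq:BL:stationary}. What survives is the clean bound
\[
 c(p)\int_\Omega|\nabla(u-u^*)|^p\d x \leq p\bigl(\EuScript{E}(u)-\EuScript{E}(u^*)\bigr),
\]
whose right-hand side tends to zero by Lemma~\ref{lem:BL:convergenceGrad}. Since $u-u^*\in W^{1,p}_0(\Omega)$, Poincaré's inequality renders $\pnorm{\nabla(u-u^*)}$ equivalent to the full $W^{1,p}_0$-norm, and the claimed convergence follows.

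I do not anticipate a serious obstacle here: this is the standard convexity-to-uniqueness mechanism in quantitative form. The only sanity check needed is that the time slice $u(t,\cdot)-u^*$ truly lies in $W^{1,p}_0(\Omega)$ for each $t\geq0$ so that it may be inserted into the weak formulation of \eqref{eq:BL:stationary}; this is immediate from the boundary conditions prescribed in \eqref{eq:BL:stationary} and \eqref{eq:BL:telegraph}.
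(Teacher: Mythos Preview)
Your argument is correct and is essentially identical to the paper's own proof: both use \eqref{eq:BL:VectIne2} with $a=\nabla u^*$, $b=\nabla u$, kill the cross term via the weak $p$-harmonicity of $u^*$ tested against $u-u^*\in W^{1,p}_0(\Omega)$, and then invoke Lemma~\ref{lem:BL:convergenceGrad} together with Poincar\'e's inequality. The only difference is the order in which Poincar\'e and the convexity estimate are stated, which is purely cosmetic.
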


 \begin{proof}
 By Poincar\'e's inequality we have
 \begin{equation}
 \int_\Omega |u-u^*|^p \d x \leq \tilde{c}(p,\Omega) \int_\Omega |\nabla u  - \nabla u^*|^p \d x.
   \end{equation}
For the $p$-harmonic function $u^*$ it holds
\begin{equation*}
 \int_\Omega |\nabla u^*|^{p-2}\skalarProd{\nabla u^*}{\nabla u - \nabla u^*} = 0,
\end{equation*}
 so that, by \eqref{eq:BL:VectIne2} we obtain
\begin{equation}\label{eq:BL:estimate}
c(p) \int_\Omega |\nabla u - \nabla u^*|^p \d x\leq  \int_\Omega|\nabla u|^p -|\nabla u^*|^p \d x=p\cdot(\EuScript{E}(u)-\EuScript{E}(u^*)).
\end{equation}
Thus, combining the above estimates we arrive at
\begin{equation*}
 \|{u-u^*}\|_{W^{1,p}(\Omega)}\le c(p,\Omega)\cdot |\EuScript{E}(u)-\EuScript{E}(u^*)|\xrightarrow{t\to\infty}0
\end{equation*}
by Lemma \ref{lem:BL:convergenceGrad}.
\end{proof}

\section{Proof of the decay rate}
We are now prepared to prove our main result:
\begin{proof}[Proof of Theorem \ref{thm:BL:main}]
A multiplication of
$$
u_{tt}+a\, u_t =\Delta_p u - \Delta_p u^*
$$
with $w=w(t,x)\coloneqq u(t,x)-u^*(x)$, and integrating by parts (note that $\left.w\right|_{\partial\Omega}=0$) yields
\begin{align*}
 \int_\Omega w_{tt}\,w+a\,w_t\,w \d x &= - \int_\Omega\skalarProd{|\nabla u|^{p-2}\nabla u -|\nabla u^*|^{p-2}\nabla u^* }{\nabla u - \nabla u^*}\d x\\
 & \overset{\eqref{eq:BL:VectIne}}{\leq} -2^{2-p}\int_\Omega |\nabla w|^p \d x.
\end{align*}
Hence, multiplying both sides of the last inequality with $a>0$ and adding ~ $\int_\Omega a\,w_t^2$ ~ we end up with
\begin{equation}\label{eq:BL:8}
 \frac{\d}{\d t}\int_\Omega \frac{a^2}{2}w^2+a\,w\,w_t \d x \leq \int_\Omega a\,w_t^2-2^{2-p}a|\nabla w|^p\d x.
\end{equation}
Recall the definition of our error term
\begin{equation}\tag{\ref{eq:BL:errordef}}
 \mathrm{e}(t)\coloneqq \int_\Omega \frac{a^2}{2}w^2+a\,w\,w_t + w_t^2+2\cdot\left(\frac1p|\nabla u|^p-\frac1p|\nabla u^*|^p\right)\d x.
\end{equation}
So, $\mathrm{e}\in W^{1,1}(0,\infty)$ and due to the minimizing properties of ~$u^*=u^*(x)$,~ we have that ~ $\mathrm{e}(t)\geq0$ for all $t>0$. ~ Since $w_t=u_t$ we obtain
\begin{align}
 \mathrm{e}'(t) &\overset{\eqref{eq:BL:8}}{\leq} \int_\Omega a\,w_t^2-2^{2-p}a|\nabla w|^p\d x + \frac{\d}{\d t}\int_\Omega u_t^2 + 2\cdot\frac1p|\nabla u|^p  \d x \notag\\
 &\overset{\eqref{eq:BL:Lemma1}}{=} -a \int_\Omega w_t^2+2^{2-p}|\nabla w|^p \d x\label{eq:BL:abl} \leq 0.
\end{align}
Moreover, again with $w_t=u_t$ we have
\begin{align}\label{eq:hilf1e}
 \frac{\mathrm{e}'(t)}{a} &= \int_\Omega a\,w\,w_t+w\,w_{tt}+w_t^2\d x+\frac{2}{a}\frac{\d}{\d t}\int_\Omega \frac12 w_t^2+\frac1p|\nabla u|^p\d x\notag\\
 &\overset{\mathclap{\eqref{eq:BL:Lemma1}}}{=} \ \int_\Omega a\,w\,w_t+w\,w_{tt}-w_t^2\d x = \int_\Omega w(\Delta_p u - \Delta_p u^*) -w_t^2\d x \notag \\
 &= \int_\Omega w \Delta_p u -w_t^2\d x,
 \end{align}
 since $\Delta_p u^*=0$ in $\Omega$. Using integration by parts (note that $\left.w\right|_{\partial\Omega}=0$) we obtain 
\begin{align}\label{eq:hilf2e}
\left| \int_\Omega w \Delta_p u \d x \right | &= \left| \int_\Omega |\nabla u |^{p-2}\skalarProd{\nabla u}{\nabla w}\d x \right| \le  \int_\Omega |\nabla u|^{p-1} |\nabla w|\d x \notag \\
& \le \pnorm{\nabla u}^{p-1}\pnorm{\nabla w}
\end{align}
by Hölder's inequality. Using the boundedness of the gradient \eqref{eq:BL:gradientbound} we conclude: 
\begin{align}
 \left|\frac{\mathrm{e}'(t)}{a}\right|& \overset{\eqref{eq:hilf1e}}=\left|\int_\Omega w \Delta_p u -w_t^2\d x\right| \overset{\eqref{eq:hilf2e}}{\le} \pnorm{\nabla u}^{p-1}\pnorm{\nabla w} +\|w_t\|_{L^2(\Omega)}^2\notag\\
 &\overset{\eqref{eq:BL:gradientbound}}{\le}M^{p-1}\pnorm{\nabla w}+\|w_t\|_{L^2(\Omega)}^2
 \xrightarrow{t\to\infty}0,\label{eq:BL:derivative}
\end{align}
by Corollary \ref{cor:BL:u} and Corollary \ref{cor:BL:ut}, respectively.

Our next goal is to estimate the error in terms of its derivative. In regard with \eqref{eq:BL:Rudin} we arrive at
\begin{equation*}
 \mathrm{e}(t) \leq \int_\Omega\left(\frac{a^2}{2}+a\right)w^2 + \left(\frac{a}{4}+1\right)w_t^2 \d x + c(p,\Omega,u_0)\cdot\pnorm{\nabla w}\,.
\end{equation*}
 Using Lebesgue embedding and Poincar\'e's inequality for the first term we get
\begin{equation*}
 \mathrm{e}(t) \leq c_1(p,\Omega,a)\cdot\pnorm{\nabla w}^2 + \left(\frac{a}{4}+1\right)\int_\Omega w_t^2 \d x + c(p,\Omega,u_0)\cdot\pnorm{\nabla w}\,.
\end{equation*}
Furthermore, in \eqref{eq:BL:abl} we already aimed
\[
 \int_\Omega w_t^2+2^{2-p}|\nabla w|^p \d x \leq -\frac{\mathrm{e}'(t)}{a}.
\]
All in all, we get
\begin{equation*}
 \mathrm{e}(t) \leq c_2(p,\Omega,a)\cdot \left(-\frac{\mathrm{e}'(t)}{a}\right)^{\frac{2}{p}} + \left(\frac{a}{4}+1\right)\cdot \left(-\frac{\mathrm{e}'(t)}{a}\right) + c_3(p,\Omega,u_0)\cdot\left(-\frac{\mathrm{e}'(t)}{a}\right)^{\frac{1}{p}}.
\end{equation*}

Since $$ -\frac{\mathrm{e}'(t)}{a} \xrightarrow{t\to\infty} 0,$$ cf. \eqref{eq:BL:derivative}, the error term $\mathrm{e}(t)\geq0$ satisfies for large time a differential inequality of type
\begin{align}
 \mathrm{e}(t) &\leq c_4(p,\Omega,u_0,a)\cdot (-\mathrm{e}'(t))^{\frac1p},\notag\\
 \shortintertext{and we may rewrite this}
 \mathrm{e}'(t) &\leq - c_5(p,\Omega,u_0,a)\cdot \mathrm{e}(t)^p,\notag\\
\shortintertext{respectively, so by Lemma 1.6 from \cite{BL:HZ} we gain}
 \mathrm{e}(t)&\leq c_6(p,\Omega,u_0,a)\cdot t^{-\frac{1}{p-1}}.
\label{eq:BL:decayRate1}
\end{align}

By \eqref{eq:BL:decayRate1}, \eqref{eq:BL:estimate} and the Poincar\'e inequality we finally arrive at
\begin{equation*}
 \|u-u^*\|_{W^{1,p}(\Omega)}^p \leq c_7(p,\Omega,u_0,a) \cdot t^{-\frac{1}{p-1}}\,.\qedhere
\end{equation*}
\end{proof}

\subsection{Enhancement  of the decay rate for \boldmath $p=2$}\label{sec:BL:improved}
A crucial ingredient in our proof of the decay rate was inequality \eqref{eq:BL:Rudin} which we applied to estimate the difference of the energies. In fact, for $p=2$ this relation can be improved to the \textit{equality}
\begin{equation*}
 \int_\Omega |\nabla u|^2 -|\nabla u^*|^2 \d x = \int_\Omega |\nabla u - \nabla u^*|^2 \d x
\end{equation*}
where we used the harmonicity of $u^*$. Hence, we obtain for the error term
\begin{align}
 \mathrm{e}(t) & \leq \int_\Omega \left(\frac{a^2}{2}+a\varepsilon\right)w^2 + \left(\frac{a}{4\varepsilon}+1\right)w_t^2+|\nabla w|^2\d x\notag\\
 &\le \int_\Omega\left(\left(\frac{a^2}{2}+a\,\varepsilon\right)\tilde{c}(\Omega)+1\right)|\nabla w|^2 +\left(\frac{a}{4\varepsilon}+1\right)w_t^2\d x\notag\\
 & = c(a,\Omega) \int_\Omega w_t^2+|\nabla w|^2 \d x\le c(a,\Omega)\left(-\frac{\mathrm{e}'(t)}{a}\right)\label{eq:BL:p2}
\end{align}
where in the intermediate steps we used the Poincar\'e inequality, and $\varepsilon>0$ was choosen in such a way that the prefactors coincided. Relation \eqref{eq:BL:p2} may be rewriten to
\[
\mathrm{e}'(t)\le -\frac{a}{c(a,\Omega)}\,\mathrm{e}(t) \quad \text{for all $t>0$},
\]
so, by Gronwall's inequality, the error term fulfills
\[
   \mathrm{e}(t) \le c\cdot \exp\left(-\frac{a}{c(a,\Omega)}\ t\right)
\]
and for the decay rate we arrive at
\[
 \|u-u^*\|^2_{W^{1,2}(\Omega)}\le C \cdot \exp\left(-\frac{a}{c(a,\Omega)}\ t\right) \quad \text{for all $t>0$},
\]
a well known result, cf. e.g. Theorem 2.1 a) in \cite{BL:HZ}.

\subsection*{Acknowledgments}
The authors are grateful to   the  Hausdorff Research Institute for Mathematics (Bonn) for support
and hospitality during the trimester program \emph{Evolution of Interfaces}, where work on
this article was undertaken.  Moreover, the authors  would
like to thank John Andersonn for helpful suggestions and discussion.

\end{document}